\documentclass[a4paper,12pt]{amsart}
\pdfoutput=1
\usepackage{etex}
\usepackage[a4paper,margin=1in]{geometry}
\usepackage{amssymb,amsfonts,amsmath,amsthm}
\usepackage{latexsym,amsrefs,mathtools,stmaryrd}

\def\bra#1#2{\ensuremath{\langle#1\rangle}_{#2}}

\newcommand{\CC}{\mathbb{C}}
\newcommand{\ZZ}{\mathbb{Z}}
\newcommand{\QQ}{\mathbb{Q}}

\newcommand{\RR}{\mathbb{R}}

\renewcommand{\phi}{\varphi}

\sloppy

\title{Unit distance graphs and algebraic integers}
\author{Danylo~Radchenko}
\address{Max Planck Institute for Mathematics\\
    Vivatsgasse 7\\
    53111 Bonn, Germany}
\email{danradchenko@gmail.com}

\newtheorem{theorem}{Theorem}

\newtheorem{problem}{Problem}

\begin{document}
\begin{abstract}
    We answer a question of Brass about vertex degrees in
    unit distance graphs of finitely generated additive subgroups of $\RR^2$.
\end{abstract}
\maketitle
The following problem was posed in~\cite{BrMoPa}*{p.~186} (see also~\cite{Br}).
\begin{problem}
    Does there exist a finitely generated additive subgroup 
    $\mathcal{A}\subset\RR^2$ such 
    that there are infinitely many elements of $\mathcal{A}$ 
    lying on the unit circle?
\end{problem}
If we look at the graph whose vertex 
set is a given finitely generated additive 
group~$\mathcal{A}\subset\RR^2$, and 
where $x,y\in\mathcal{A}$ are joined by an edge if 
and only if $|x-y|=1$, then the question is whether such a 
graph can have infinite vertex degrees. This question
was motivated by the problem of Erd\H{o}s about maximal number 
of unit distances among $n$ points in the plane~\cite{Er}.
In this short note we give a positive answer to Problem~1. 
Let us denote 
    \[\bra{v_1,\dots,v_n}{\ZZ}:=
    \{a_1v_1+\dots+a_nv_n\;|\;a_1,\dots,a_n\in\ZZ\}.\]

\begin{theorem} \label{thm:example}
    There exist four vectors $v_1,v_2,v_3,v_4\in\RR^2$ such that 
    $\mathcal{A}=\bra{v_1,v_2,v_3,v_4}{\ZZ}$ has infinitely many
    elements on the unit circle.
\end{theorem}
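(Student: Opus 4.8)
The plan is to identify $\RR^2$ with the complex plane $\CC$ and to produce a finitely generated additive subgroup of $(\CC,+)$ containing infinitely many points of the unit circle $\{z\in\CC:|z|=1\}$. The mechanism is an elementary remark: if $\alpha\in\CC$ is an algebraic integer with $|\alpha|=1$, then all of its nonnegative powers $\alpha^n$ again lie on the unit circle, whereas the additive group $\ZZ[\alpha]$ is finitely generated; and if moreover $\alpha$ is not a root of unity, then these powers are pairwise distinct (from $\alpha^n=\alpha^m$ and $\alpha\neq0$ one would get $\alpha^{n-m}=1$). Taking $\alpha$ of degree exactly $4$ over $\QQ$ then furnishes precisely four generators.

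The first step is to exhibit an algebraic integer $\alpha$ of degree $4$ with $|\alpha|=1$ which is not a root of unity. A candidate is a root of the palindromic polynomial $f(z)=z^4-z^3-z^2-z+1$. Here $f$ has no rational root and admits no factorization into two monic integer quadratics (a short case check using $bd=1$), so it is irreducible and $[\QQ(\alpha):\QQ]=4$. The substitution $w=z+z^{-1}$ converts $z^{-2}f(z)$ into $w^2-w-3$, whose root $w=\tfrac12(1-\sqrt{13})$ lies in $(-2,2)$; hence $f$ has two roots of the form $e^{\pm i\theta}$ on the unit circle. Finally, $f$ is not one of the degree-$4$ cyclotomic polynomials $\Phi_5,\Phi_8,\Phi_{10},\Phi_{12}$ (equivalently, $w^2-w-3$ is not the minimal polynomial of $2\cos(2\pi k/n)$ for any $n$ with $\phi(n)=4$), so a unit-circle root $\alpha$ of $f$ is not a root of unity.

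The second step is purely formal. Fix such an $\alpha$, regard $1,\alpha,\alpha^2,\alpha^3$ as vectors $v_1,v_2,v_3,v_4$ in $\CC\cong\RR^2$, and set $\mathcal A=\bra{v_1,v_2,v_3,v_4}{\ZZ}$. Since $\alpha$ is a root of a monic integer polynomial of degree $4$, the relation $\alpha^4=\alpha^3+\alpha^2+\alpha-1$ together with induction shows $\alpha^n\in\ZZ+\ZZ\alpha+\ZZ\alpha^2+\ZZ\alpha^3=\mathcal A$ for every $n\ge0$. Each such element has modulus $|\alpha|^n=1$, and the $\alpha^n$ are pairwise distinct because $\alpha$ is not a root of unity; hence $\mathcal A$ contains infinitely many points of the unit circle, which is Theorem~\ref{thm:example}. (One may also note that $|\alpha|=1$ forces $\alpha^{-1}=\ol\alpha$, again an algebraic integer, so in fact all integer powers of $\alpha$ lie in $\mathcal A$; this is not needed.)

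The main obstacle is concentrated entirely in the first step: one must locate $\alpha$ and simultaneously certify that it has degree exactly $4$, lies on the unit circle, and is not a root of unity; everything else is bookkeeping. It is worth recording that fewer generators cannot work — an algebraic integer of degree at most $3$ lying on the unit circle is necessarily a root of unity (for degree $2$ its trace is an integer of absolute value $<2$; for degree $3$ one of its conjugates is forced to equal $\pm1$, contradicting irreducibility) — so the count of four in the statement is best possible, although only existence is asserted.
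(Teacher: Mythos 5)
Your proof is correct and follows essentially the same route as the paper: the same polynomial $z^4-z^3-z^2-z+1$, a root $\alpha$ on the unit circle that is not a root of unity, and the observation that all nonnegative powers of $\alpha$ lie in $\ZZ+\ZZ\alpha+\ZZ\alpha^2+\ZZ\alpha^3$; your verifications (irreducibility by ruling out quadratic factors, the $w=z+z^{-1}$ substitution, comparison with the four quartic cyclotomic polynomials) are just as valid as the paper's Descartes-rule/reciprocity argument. Only your closing aside slightly overstates the conclusion: showing that an algebraic integer of degree $\le 3$ on the unit circle must be a root of unity rules out this particular construction with fewer generators, but it does not show that \emph{no} rank-$3$ subgroup works --- that stronger optimality statement is the content of the paper's Theorem~\ref{thm:rank3} and requires a separate argument.
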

\begin{proof}
    We identify $\RR^2$ with $\CC$. Let $p\in\ZZ[z]$ be the irreducible polynomial
        \[p(z) = z^4-z^3-z^2-z+1.\]
    Using Descartes' rule of sign and noting that $p(1)<0$ we see that
    $p$ has exactly two positive real roots. Since $z^4-z^2+1>0$ for all $z\in\RR$,
    it has no negative roots.
    Since $p$ is reciprocal, the two remaining complex roots $\alpha$ 
    and $\overline\alpha$ must be inverses of each other, 
    so that $|\alpha|=1$. Next, we have 
    $\alpha^m\in\bra{ 1,\alpha,\alpha^2,\alpha^3}{\ZZ}$ for all $m\in\ZZ$ because~$p$ is monic and has 
    integer coefficients. Moreover, the numbers 
    $\alpha^m$, $m\in\ZZ$ satisfy $|\alpha^m|=1$ and
    are all distinct since~$p$ is not divisible by any 
    cyclotomic polynomial. Therefore, the set
    $\mathcal{A}=\bra{1,\alpha,\alpha^2,\alpha^3}{\ZZ}$ 
    has infinitely many elements on the unit circle.
\end{proof}

Of course, the above construction works for any
algebraic integer $\alpha\in\CC$ with~$|\alpha|=1$.
If we take $\alpha$ to be a primitive $m$-th root of unity, then 
the resulting unit distance graph has degree $m$ or $2m$ 
depending on the parity of~$m$.
Taking $\alpha\in\CC$ to be any other algebraic integer of absolute value~$1$
gives a unit distance graph of infinite degree. For example,
one can take any non-real conjugate of a Salem number
(a positive algebraic integer, all of whose conjugates have absolute value $\le1$, 
see~\cite{Sal}*{Ch.~3}); this is exactly what was done in the proof above.

It is also natural to ask what is the minimal possible 
number of generators for such~$\mathcal{A}$. 
The following result shows that in this sense 
Theorem~\ref{thm:example} is optimal.
\begin{theorem}\label{thm:rank3}
    If an additive subgroup $\mathcal{A}\subset\RR^2$ 
    has rank $\le 3$, then it has
    only finitely many elements on the unit circle $S^1$.
\end{theorem}
\begin{proof}
    Suppose that $\mathcal{A}\cap S^1$ is infinite. 
    We can restrict to the case when $\mathrm{rk}(\mathcal{A})=3$
    and $\mathcal{A}$ spans $\RR^2$ over reals, since otherwise~$\mathcal{A}$ 
    is contained in a lattice or in a line and thus 
    intersects~$S^1$ in a finite set. By the same 
    argument, the subgroup generated by $\mathcal{A}\cap S^1$ 
    must also have rank~$3$, and therefore we can choose 
    three elements $v_1,v_2,v_3\in\mathcal{A}\cap S^1$ and 
    an integer $n>0$ such that 
    $\mathcal{A} \subseteq \bra{v_1/n,v_2/n,v_3/n}{\ZZ}$.
    
    Let us denote $(v_1,v_2)=\gamma$, $(v_2,v_3)=\alpha$, 
    and $(v_3,v_1)=\beta$. Before we continue, let us 
    record some elementary identities between $v_i$ and the 
    inner products $\alpha,\beta,\gamma$. 
    First, since the vectors $v_i$ are in $\RR^2$, the determinant of the Gram 
    matrix of $v_1,v_2,v_3$ is $0$, that is
        \begin{equation}\label{eq:gram}
        1+2\alpha\beta\gamma-\alpha^2-\beta^2-\gamma^2 = 0.
        \end{equation}
    Next, since $v_i$ are linearly independent over $\QQ$, we have $|\alpha|,|\beta|,|\gamma|<1$,
    and thus, using~\eqref{eq:gram}, we see that $\alpha\ne\beta\gamma$, $\beta\ne\alpha\gamma$, 
    and $\gamma\ne\alpha\beta$. Finally, we have the following identity
        \begin{equation}\label{eq:v1v2v3}
        \frac{v_1}{\alpha-\beta\gamma}
        +\frac{v_2}{\beta-\alpha\gamma}
        +\frac{v_3}{\gamma-\alpha\beta} = 0.
        \end{equation}
    
    Let $Q(x,y,z)=\|xv_1+yv_2+zv_3\|^2$ and let $\mathcal{S}$ 
    be the set of solutions $(x,y,z)\in\ZZ^3$ 
    of equation $Q(x,y,z)=n^2$; by our assumption this set is infinite. A triple~$(x,y,z)$ belongs to $\mathcal{S}$ 
    if and only if
        \[x^2+y^2+z^2+2\alpha yz+2\beta zx+2\gamma xy = n^2.\]
    
    Since $v_1$, $v_2$, and $v_3$ are independent over $\QQ$, the intersection of $\mathcal{S}$ with any plane in $\ZZ^3$ is finite. In particular, the
    set $\mathcal{S}_0=\{(x,y,z)\in\mathcal{S}\,|\, xyz\ne 0\}$
    is infinite, and the condition 
    for $(x,y,z)\in\mathcal{S}_0$ can be rewritten as
        \begin{equation}\label{eq:rationality}
        \frac{\alpha}{x}+\frac{\beta}{y}+\frac{\gamma}{z} 
        = \frac{n^2-x^2-y^2-z^2}{2xyz}.
        \end{equation}
    Denote $\mathcal{V}=\{(1/x,1/y,1/z)\;|\; (x,y,z)\in\mathcal{S}_0\}$.
    We consider three cases.
    
    \textbf{Case 1.} The $\QQ$-span of $\mathcal{V}$ 
    is $\QQ^3$. By choosing any three linearly independent 
    vectors in $\mathcal{V}$ and solving~\eqref{eq:rationality} 
    for $\alpha,\beta,\gamma$, we get that $\alpha$, $\beta$, 
    and $\gamma$ must be rational. But then~\eqref{eq:v1v2v3}
    gives a linear relation over $\QQ$ contradicting 
    the fact that $\mathrm{rk}(\mathcal{A})=3$.
    
    \textbf{Case 2.} The $\QQ$-span of $\mathcal{V}$ 
    is $1$-dimensional. 
    Then $\mathcal{S}_0$ is contained in a line through the 
    origin, and by homogeneity of $Q$ we get 
    that $|\mathcal{S}_0|\le 2$, a contradiction.
    
    \textbf{Case 3.} The $\QQ$-span of $\mathcal{V}$ 
    is $2$-dimensional. Then there is a unique nonzero 
    triple $(a,b,c)\in\ZZ^3$ such that for all $(x,y,z)\in\mathcal{S}_0$ 
    we have $a/x+b/y+c/z=0$. In this case, by
    solving~\eqref{eq:rationality} for $\alpha,\beta,\gamma$, we get
        \begin{equation}\label{eq:alphas}
        \alpha=\lambda a+\alpha_0,\quad \beta
        =\lambda b+\beta_0,\quad\gamma=\lambda c+\gamma_0
        \end{equation}
    for some $(\alpha_0,\beta_0,\gamma_0)\in\QQ^3$ and $\lambda\in\RR$.
    We may assume that $abc\ne 0$,
    since otherwise $\mathcal{S}_0$ would be contained in one 
    of the three planes $ay+bx=0$, $az+cx=0$, or $bz+cy=0$, 
    and hence would be finite.
    
    Next, recall that all integer solutions of $1/x+1/y+1/z=0$ 
    are given by $x=dst$, $y=drt$, $z=drs$ for 
    some nonzero integers $d,r,s,t$ with $r+s+t=0$ (simply note that
    the equation is equivalent to $z^2=(z+x)(z+y)$ that is in turn equivalent to $z^2=x'y'$).
    By applying this observation to the equation $a/x+b/y+c/z=0$ 
    we deduce that any solution $(x,y,z)\in\mathcal{S}_0$ 
    can be written as
        \[x=\frac{dst}{bc},\quad y=\frac{drt}{ac},\quad z=\frac{drs}{ab}\]
    for some nonzero integers $d,r,s,t$ with $r+s+t=0$.
    Plugging these expressions back into $Q$ and setting $v_3=\kappa_1v_1+\kappa_2v_2$ 
    (where $\kappa_i$ can be computed from~\eqref{eq:v1v2v3}) we get
        \[n^2 = \|xv_1+yv_2+zv_3\|^2 = 
        \frac{d^2}{(abc)^2}\|s((\kappa_1c-a)r-as)v_1+r((\kappa_2c-b)s-br)v_2\|^2,\]
    and thus, setting $\phi_1(r,s)=(\kappa_1c-a)r-as$ and
    $\phi_2(r,s)=(\kappa_2c-b)s-br$, we get 
        \[\|s\phi_1(r,s)v_1+r\phi_2(r,s)v_2\|^2
        = \frac{(abcn)^2}{d^2} \le (abcn)^2.\]
    Since $v_1$ and $v_2$ form a basis in $\RR^2$ and $|s|,|r|\ge 1$ 
    we get that $|\phi_i(r,s)|\le C$
    for some constant $C$ that depends only on $a,b,c,n$, and $\gamma$. 
    If $\phi_1$ and $\phi_2$ are not proportional, 
    then these two inequalities define a bounded region 
    in the $(r,s)$-plane, and hence there are only finitely many
    solutions in this case, contrary to our assumption.
    Therefore,~$\phi_1$ and~$\phi_2$ must be proportional, that is
        \[(\kappa_1c-a)(\kappa_2c-b)-ab = 0\]
    or equivalently
        \begin{equation}\label{eq:secondrelation}
        a(\alpha-\beta\gamma)+b(\beta-\alpha\gamma)+c(\gamma-\alpha\beta) = 0.
        \end{equation}
    
    If we now substitute~\eqref{eq:alphas} into~\eqref{eq:gram} 
    and into~\eqref{eq:secondrelation},
    we obtain two different polynomial equations on $\lambda$, 
    $p_1(\lambda)=0$ and $p_2(\lambda)=0$, where
        \begin{multline*}
        p_1(\lambda) \;=\; 1+2\alpha\beta\gamma-\alpha^2-\beta^2-\gamma^2
        = 2abc\lambda^3
        +(2bc\alpha_0+2ac\beta_0+2ab\gamma_0-a^2-b^2-c^2)\lambda^2\\
        +2(a\beta_0\gamma_0+b\alpha_0\gamma_0+c\alpha_0\beta_0
        -a\alpha_0-b\beta_0-c\gamma_0)\lambda
        +(2+\alpha_0\beta_0\gamma_0-\alpha_0^2-\beta_0^2-\gamma_0^2)
        \end{multline*}
    and 
        \begin{multline*}
        p_2(\lambda) \;=\; a(\alpha-\beta\gamma)+b(\beta-\alpha\gamma)+c(\gamma-\alpha\beta) = -3abc\lambda^2 \\
        -(2bc\alpha_0+2ac\beta_0+2ab\gamma_0 - a^2-b^2-c^2)\lambda -(a\beta_0\gamma_0+b\alpha_0\gamma_0+c\alpha_0\beta_0-a\alpha_0-b\beta_0-c\gamma_0).
        \end{multline*}
    We observe that $2p_2(\lambda)=-p_1'(\lambda)$, so 
    that $\lambda$ is a double root of a cubic 
    polynomial~$p_1$ with rational coefficients. 
    It is well-known that this implies $\lambda\in\QQ$, 
    but then $\alpha$, $\beta$, and $\gamma$ are also 
    rational, and once again we arrive at a contradiction.
\end{proof}

\end{document}